\documentclass[11pt]{article}

\usepackage[T1]{fontenc}
\usepackage[utf8]{inputenc}
\usepackage[a4paper,margin=1in]{geometry}
\usepackage{amsmath,amssymb,amsthm}
\usepackage{aliascnt}
\usepackage{microtype}
\usepackage[hidelinks]{hyperref}
\usepackage[nameinlink,noabbrev]{cleveref}

\allowdisplaybreaks

\newtheorem{theorem}{Theorem}[section]

\newaliascnt{lemma}{theorem}
\newtheorem{lemma}[lemma]{Lemma}
\aliascntresetthe{lemma}

\newaliascnt{corollary}{theorem}
\newtheorem{corollary}[corollary]{Corollary}
\aliascntresetthe{corollary}

\newaliascnt{conjecture}{theorem}
\newtheorem{conjecture}[conjecture]{Conjecture}
\aliascntresetthe{conjecture}

\newcommand{\mad}{\operatorname{mad}}
\newcommand{\og}{\operatorname{og}}

\crefname{theorem}{Theorem}{Theorems}
\crefname{lemma}{Lemma}{Lemmas}
\crefname{corollary}{Corollary}{Corollaries}
\crefname{conjecture}{Conjecture}{Conjectures}
\crefname{section}{Section}{Sections}

\hypersetup{
  pdftitle={A Proof of the Chen--Raspaud Conjecture},
  pdfauthor={Qi Wu and Yong Lu},
  pdfkeywords={Kneser graph, fractional coloring, maximum average degree, odd girth, discharging}
}

\title{A Proof of the Chen--Raspaud Conjecture\thanks{%
This work is supported by the National Natural Science Foundation of China
(Nos.~12371348 and 12201258) and the High-Quality Science and Technology
Cultivation Project of Jiangsu Normal University (No.~\mbox{JSNUGZL2026069}).}}
\author{Qi Wu, Yong Lu\thanks{Corresponding author.}\\[2pt]
\small School of Mathematics and Statistics, Jiangsu Normal University,\\[-1pt]
\small Xuzhou, Jiangsu 221116, People's Republic of China\\[-1pt]
\small Emails:~\texttt{wuqimath@163.com, luyong@jsnu.edu.cn}}
\date{}

\begin{document}
\maketitle

\begin{abstract}
For every integer $k\ge2$, Chen and Raspaud conjectured that each graph
$G$ with odd girth  $\og(G)\ge2k+1$ and maximum average degree $\mad(G)<2+1/k$ has a
$(2k+1:k)$-coloring. In this paper, we prove the conjecture.
\end{abstract}

\noindent\textbf{Keywords:} Chen-Raspaud conjecture; fractional coloring;  maximum average degree; odd girth; discharging.

\medskip
\noindent\textbf{2020 Mathematics Subject Classification:} 05C15; 05C35.

\section{Introduction}

All graphs are finite and simple. For a positive integer $n$, write
$[n]=\{1,\ldots,n\}$. If $X$ is finite and $0\le r\le|X|$, then
$\binom{X}{r}$ is the family of all $r$-subsets of $X$. A graph
homomorphism from $G$ to $H$ is a map $f:V(G)\to V(H)$ such that
$f(u)f(v)\in E(H)$ whenever $uv\in E(G)$.

For positive integers $a\ge b$, an $(a:b)$-coloring of $G$ assigns a member
of $\binom{[a]}{b}$ to each vertex so that adjacent vertices receive
disjoint sets. Equivalently, it is a homomorphism from $G$ to the Kneser
graph $KG(a,b)$, whose vertices are the $b$-subsets of $[a]$ and whose
edges join disjoint sets. The problem goes back to Kneser~\cite{Kneser};
see also Hell and Ne\v{s}et\v{r}il~\cite{HellNesetril}. Stahl~\cite{Stahl}
systematically studied $n$-tuple colorings. The fractional chromatic number
is
\[
 \chi_f(G)=\inf\{a/b:G\text{ has an }(a:b)\text{-coloring}\};
\]
see Scheinerman and Ullman~\cite{ScheinermanUllman} and
Stahl~\cite{Stahl}.

For a graph $G$, its maximum average degree is
\[
 \mad(G)=\max_{\varnothing\ne H\subseteq G}
 \frac{2|E(H)|}{|V(H)|}.
\]
The maximum is taken over subgraphs with at least one vertex. The use of
subgraph density in coloring is closely related to the classical
degeneracy bound of Szekeres and Wilf~\cite{SzekeresWilf}. The girth
$g(G)$ is the length of a shortest cycle, with $g(G)=\infty$ for a forest.
The odd girth $\og(G)$ is the length of a shortest odd cycle, with
$\og(G)=\infty$ for a bipartite graph.

Chen and Raspaud~\cite{ChenRaspaud} proposed the following conjecture in
their study of homomorphisms to the Petersen graph.

\begin{conjecture}[Chen--Raspaud~\cite{ChenRaspaud}]\label{conj:CR}
For every integer $k\ge2$, if a graph $G$ satisfies
$\og(G)\ge2k+1$ and $\mad(G)<2+1/k$, then $G$ has a
$(2k+1:k)$-coloring.
\end{conjecture}

The target is $KG(2k+1,k)$. Lov\'asz~\cite{Lovasz} proved that
$\chi(KG(2k+1,k))=3$, and Denley~\cite{Denley} determined its odd girth:
$\og(KG(2k+1,k))=2k+1$. The graph is vertex-transitive. Hence the
Erd\H{o}s--Ko--Rado theorem~\cite{ErdosKoRado}, together with the standard
formula for the fractional chromatic number of a vertex-transitive graph
\cite{ScheinermanUllman}, gives
\[
 \chi_f(KG(2k+1,k))
 =\frac{\binom{2k+1}{k}}{\binom{2k}{k-1}}
 =\frac{2k+1}{k}.
\]
Thus the two hypotheses in \cref{conj:CR} match the odd girth and the
fractional chromatic number of the target.

The case $k=1$ follows from the greedy coloring argument: if
$\mad(G)<3$, every nonempty subgraph has a vertex of degree at most two,
so $G$ is $3$-colorable. Chen and Raspaud~\cite{ChenRaspaud} proved the
case $k=2$. The case $k=3$ was proved by {\L}yczek, Nazarczuk, and
Rz\k{a}\.{z}ewski~\cite{LyczekNazarczukRzazewski}. Choi~\cite{Choi}
proved the case $k=4$ and developed the full-thread replacement framework
used below. The path-extension criterion is due to
Klostermeyer and Zhang~\cite[Lemma~2.3]{KlostermeyerZhang}.

Related homomorphism and circular-coloring results for sparse graphs were
obtained by Borodin et al.~\cite{BorodinHartkeIvanovaKostochkaWest,
BorodinKimKostochkaWest}. Fractional coloring under degree, girth, and
planarity assumptions was studied in
\cite{DvorakHu,DvorakSereniVolec,DvorakSkrekovskiValla,HatamiZhu,
KardosKralVolec}. Cranston and West~\cite{CranstonWest} give an
introduction to the discharging method.

For $x,y\in\{0,1\}^m$, the Hamming distance $d_H(x,y)$ is the number of
coordinates on which they differ. A Hamming ball consists of the vectors
within a fixed Hamming distance of a center. Alon, Jin, and
Sudakov~\cite{AlonJinSudakov} determined the Helly number of equal-radius
Hamming balls. Junnila, Laihonen, Lehtil\"a, and Padavu
Devaraj~\cite{JunnilaLaihonenLehtilaDevaraj} gave an exact formula for
intersections of several $q$-ary Hamming balls with varying radii. The
hypothesis of \cref{lem:hamming} also bounds the total demand, so these
results do not apply directly.

Fiedorowicz's Lemma~3.1 in~\cite{Fiedorowicz} would imply a
homomorphism $KG(5,2)\to KG(9,4)$. This is impossible. The image of an odd
cycle is an odd closed walk of the same length, and every odd closed walk
contains an odd cycle no longer than the walk. Hence a homomorphism
$H_1\to H_2$ implies $\og(H_2)\le\og(H_1)$, whereas Denley's
formula~\cite{Denley} gives $\og(KG(5,2))=5$ and
$\og(KG(9,4))=9$. Consequently, the argument in that version does not establish
\cref{conj:CR}. Our proof does not use induction on $k$.

We prove the conjecture in full.

\begin{theorem}\label{thm:main}
Let $k\ge2$. If a graph $G$ satisfies $\og(G)\ge2k+1$ and
$\mad(G)<2+1/k$, then $G$ has a $(2k+1:k)$-coloring.
\end{theorem}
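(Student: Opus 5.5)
We argue by minimal counterexample and discharging, in the framework of Choi's full-thread replacement. Let $G$ be a counterexample minimizing $|V(G)|+|E(G)|$. Standard reductions apply. $G$ is connected and has minimum degree at least $2$, since a vertex of degree at most $1$ can always be colored: $KG(2k+1,k)$ has no isolated vertices. Call a maximal path whose internal vertices all have degree $2$ a \emph{thread}. The threads, together with the vertices of degree at least $3$, describe $G$ completely.

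The key local tool is the path-extension criterion of Klostermeyer and Zhang \cite[Lemma~2.3]{KlostermeyerZhang}. In $KG(2k+1,k)$, two vertices $A,B$ are joined by a walk of length $\ell$ exactly when a parity/distance condition holds. This condition can be phrased as $|A\cap B|$ or $|A\setminus B|$ being bounded in terms of $\ell$. Every pair is joined by a walk once $\ell\ge 2k$, by a walk of the right parity. The plan is to prove the following reducibility statements in order:
\begin{itemize}
\item[(R1)] No thread has $2k$ or more internal vertices. Otherwise, color $G$ minus the interior by minimality and extend by the criterion.
\item[(R2)] For a vertex $v$ of degree $d\ge3$ whose incident threads have lengths $\ell_1,\dots,\ell_d$, the sum $\sum_i(\ell_i-1)$ is bounded, roughly by $(d-2)k$ or similar. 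This is proved by deleting $v$ and its threads, coloring the rest, and then choosing a color $S\in\binom{[2k+1]}{k}$ for $v$ that is simultaneously compatible with each neighbor $u_i$ across its thread.
\end{itemize}
Compatibility across a thread of length $\ell_i$ translates into a Hamming-type condition. Viewing $k$-sets as vectors in $\{0,1\}^{2k+1}$, the set $S$ must lie within a prescribed distance of the color of $u_i$, or of its complement. This is exactly where \cref{lem:hamming} enters. It states that a family of Hamming balls (or "complement balls") whose total demand is bounded has a common point of weight $k$. Full-thread replacement handles configurations where the total demand is tight: a long thread is replaced by a shorter gadget with the same extension behavior. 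The replacement must not create short odd cycles or increase $\mad$ above $2+1/k$, which is where the odd girth hypothesis $\og(G)\ge 2k+1$ is used.

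Then discharge. Give each vertex charge $d(v)-(2+1/k)$, so that the total charge is negative because $\mad(G)<2+1/k$. A $2$-vertex has charge $-1/k$. Each vertex of degree at least $3$ sends $1/k$ to each internal vertex of its incident threads, or half of that to each end when the thread is shared between two ends. A $d$-vertex then ends with
\[
d-2-\tfrac1k-\tfrac1{2k}\textstyle\sum_i(\ell_i-1),
\]
and this is nonnegative precisely when $\sum_i(\ell_i-1)\le 2k(d-2)-2$. The reducible configurations from (R2) must therefore show that a vertex with $\sum_i(\ell_i-1)\ge 2k(d-2)-1$ is reducible. Once they do, every vertex has nonnegative final charge, which contradicts the negative total.

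The main obstacle is (R2) at the tight threshold, particularly for $d=3$. There we must show that three constraints with total thread content $2k-1$ always admit a common $k$-set $S$. This is the Hamming-ball intersection statement \cref{lem:hamming}, and the paper remarks that it is not covered by the known Helly-type results. I would prove it by induction on $2k+1$, removing a coordinate on which the centers agree or disagree in a controlled way. I would then handle $d\ge4$ by a greedy argument: constraints coming from short threads are weak, so they can be absorbed first.
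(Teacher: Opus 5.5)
Your discharging threshold $\sum_i(\ell_i-1)\ge2k(d-2)-1$ is exactly the paper's deficiency condition \eqref{eq:deficient}. The gap is in step (R2) as you describe it. If you delete $v$ and its threads and color the rest by minimality, nothing relates the far-end colors $X_i$, and then no common $S$ need exist. For instance, the list $(1,2,2k-1)$ at a degree-$3$ vertex meets your threshold. Yet if $x_1$ and $x_2$ receive the same color $X$, then $S$ must avoid $X$ (thread of length $1$) and also meet $X$ in at least $k-1\ge1$ points (thread of length $2$).

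In general, whenever $q_i+q_j>k$ with $q_i=k-\lfloor t_i/2\rfloor$, a common $S$ requires $|A_i\cap A_j|\ge q_i+q_j-k$. This is also hypothesis \eqref{eq:hpair} of \cref{lem:hamming}, and your plan never secures it. The paper obtains it from the replacements: delete $v$ and its threads, and add a fresh path of length $t_i+t_j$ between $x_i$ and $x_j$ for every edge of the constraint graph $J$. That graph is a star or a triangle; the triangle with $d\ge4$ needs \cref{lem:triangle-star}. This is not ``a long thread replaced by a shorter gadget.''

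These replacement graphs are \emph{larger} than the original. The star replacement, for example, has $(d-2)t_1$ more vertices and $(d-2)t_1$ more edges. So your minimality in $|V|+|E|$ cannot be applied to them. The paper can invoke minimality only because the counterexample minimizes the number of $3^+$-vertices first. Each replacement lowers that number while keeping odd girth at least $2k+1$ and $\mad<2+1/k$.

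On the set side, you credit \cref{lem:hamming} with a common point of weight $k$. It only produces some $y\in\{0,1\}^{2k+1}$, and the paper stresses that $y$ need not have weight $k$. Fixing the weight is the main new content, \cref{thm:quota}. There one takes a feasible $y$ of weight closest to $k$, and parity of the slacks gives $|Y|=k\pm2h$. If $h\ge1$, the choice of $y$ forces the tight indices' sets $Y\cap A_i$ (respectively $Z\cap(U\setminus A_i)$, where $Z=U\setminus Y$) to cover all pairs of $Y$ (respectively $Z$). The weighted pair-cover inequality \cref{lem:paircover} then pushes $\sum_i(2q_i+1-p_i)$ above $4k+1$, a contradiction.

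Your coordinate-deletion induction does not supply this. After one deletion the target weight is $k$ or $k-1$ in dimension $2k$. You would need a statement for general dimension and weight whose demand condition survives the step, and none is proposed. Encoding the weight as two extra balls around $\mathbf 0$ and $\mathbf 1$ adds $2k+1$ to the total demand, which \eqref{eq:htotal} cannot absorb near the threshold.

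Your plan for $d\ge4$ also rests on a reversed intuition. A thread of length $t$ imposes $|S\cap A_i|\ge k-\lfloor t/2\rfloor$, so short threads give the \emph{strongest} constraints; length $1$ forces $S\cap X_i=\varnothing$. You give no working mechanism there, whereas the paper handles every $d$ uniformly through \cref{thm:quota}. Finally, (R1) should also exclude threads of length exactly $2k$; the same extension argument does this, and it is what yields $t_i\le2k-1$.
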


The graph-theoretic part starts with results already available in the
literature. We quote the path criterion of Klostermeyer and
Zhang~\cite[Lemma~2.3]{KlostermeyerZhang}, and Choi's minimum-counterexample
structure, star replacement, and triangular replacement
\cite[Lemmas~3.1, 3.2, 3.4, and~3.5]{Choi}. These results are not reproved.
We only verify the triangle--star extension needed when the positive
constraints consist of a triangle together with a star.

The set-theoretic part has two auxiliary results. The first is a common-point
lemma for Hamming balls under a total-demand condition. The second is a
weighted pair-cover inequality. Together they give the constant-weight quota
theorem. Choi's replacements and that theorem yield the local reducibility
statement, and a standard discharging argument completes the proof; see
Cranston and West~\cite{CranstonWest} for background on the method.

\section{Threads and replacements}\label{sec:threads}

Throughout the proof, fix $k\ge2$ and put $U=[2k+1]$. A \emph{color} is a
member of $\binom{U}{k}$. The length of a path is its number of edges. A
\emph{$t$-thread} is a path of length $t$ whose internal vertices have
degree two. A thread is \emph{maximal} if it is not contained in a longer
thread. A vertex of degree at least three is a \emph{$3^+$-vertex}. A
maximal thread is \emph{strong} if its ends are distinct $3^+$-vertices.
A \emph{pendent cycle} is a cycle containing exactly one $3^+$-vertex.

For $S,X\in\binom{U}{k}$, the pair $(S,X)$ is \emph{$t$-compatible} if
\begin{equation}\label{eq:compatible}
 \begin{cases}
 |S\cap X|\le (t-1)/2, & t\text{ is odd},\\[1mm]
 |S\cap X|\ge k-t/2, & t\text{ is even}.
 \end{cases}
\end{equation}

The following criterion is due to Klostermeyer and
Zhang~\cite[Lemma~2.3]{KlostermeyerZhang}.

\begin{lemma}[Klostermeyer--Zhang~\cite{KlostermeyerZhang}]\label{lem:thread}
Two prescribed end colors extend to a coloring of a $t$-thread if and only
if they are $t$-compatible. In particular, every pair of colors is
$t$-compatible when $t\ge2k$.
\end{lemma}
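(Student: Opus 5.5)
The natural plan is induction on the thread length $t$, using the structure of $KG(2k+1,k)$ with the quantity $|S\cap X|$ as the only invariant. Since the group of permutations of $U$ acts transitively on ordered pairs of colors with a given intersection size, whether $(S,X)$ extends along a $t$-thread depends only on $i=|S\cap X|$. So I would determine the set $A_t\subseteq\{0,\dots,k\}$ of intersection sizes realizable as $|S\cap X|$ when $S$ and $X$ are the ends of a walk of length $t$ in $KG(2k+1,k)$. A coloring of a $t$-thread with prescribed end colors is exactly such a walk. The claim is then
\[
A_t=\{0,1,\dots,\lfloor (t-1)/2\rfloor\}\ (t\text{ odd}),\qquad A_t=\{k-t/2,\dots,k\}\ (t\text{ even}),
\]
with indices truncated to $[0,k]$.

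The first step is the one-step transition. Given $S$ and a neighbor $Y$ of $S$, so $Y\subseteq U\setminus S$ with $|U\setminus S|=k+1$, I would compute which values $|Y\cap X|$ can take when $|S\cap X|=i$. The set $X\setminus S$ has $k-i$ elements inside the $(k+1)$-set $U\setminus S$, and $Y$ omits exactly one element of $U\setminus S$. Hence $|Y\cap X|\in\{k-i-1,k-i\}$. The value $k-i$ is attained when $X\setminus S\neq U\setminus S$, which always holds since $k-i\le k<k+1$. The value $k-i-1$ is attained iff $X\setminus S\neq\varnothing$, that is, iff $i<k$. So the map $i\mapsto\{k-i,k-i-1\}\cap[0,k]$ describes the possible intersection sizes after one step.

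The second step is to iterate from $A_0=\{k\}$, where $S=X$:
\begin{itemize}
\item $A_1=\{0\}$;
\item $A_2=\{k,k-1\}$;
\item $A_3=\{0,1\}$;
\item in general, $A_{t+1}=\{k-i,k-i-1: i\in A_t\}\cap[0,k]$ (with $k-i-1$ allowed only for $i<k$).
\end{itemize}
A straightforward induction on $t$ gives the claimed interval formulas, which are exactly the $t$-compatibility conditions (\ref{eq:compatible}). For the "only if" direction this requires showing that no other values arise. For the "if" direction it requires showing that every value in the interval arises. Both follow from the fact that the transition is exact: every $Y$ adjacent to $S$ gives one of the two values, and each listed value is realized by some $Y$. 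Realizability by an actual walk then follows by choosing the intermediate colors one at a time, working backwards from $X$.

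Finally, for $t\ge 2k$ the intervals cover all of $[0,k]$:
\begin{itemize}
\item for odd $t\ge 2k+1$, $(t-1)/2\ge k$;
\item for even $t\ge 2k$, $k-t/2\le 0$.
\end{itemize}
So every pair of colors is $t$-compatible. The only delicate point is handling the endpoint cases $i=k$ (where $S=X$ and only $Y$ disjoint from $X$ is possible, giving value $0$) and the truncation at $0$ in the induction. I expect this bookkeeping to be the main, though mild, obstacle.
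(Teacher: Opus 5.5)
Your proof is correct. The paper gives no argument for this lemma: it quotes the criterion from Klostermeyer and Zhang (their Lemma~2.3) and uses it as a black box. Your proposal is therefore a self-contained replacement for a citation rather than a variant of an argument in the text.

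Your two ingredients are exactly what is needed.
Invariance under the symmetric group on $U$ reduces everything to $i=|S\cap X|$. In particular, the stabilizer of $X$ is transitive on colors with a given intersection with $X$, so a walk from $X$ ending at some $S_0$ can be carried to one ending at the prescribed $S$.
The one-step transition $i\mapsto\{k-i\}\cup\{k-i-1: i<k\}$ is exact. Hence $A_{t+1}=\bigcup_{i\in A_t}\{k-i,k-i-1\}\cap[0,k]$ gives both directions of the equivalence at once.
The induction closes as follows: for even $t$, $\{\max(0,k-t/2),\dots,k\}$ maps to $\{0,\dots,\min(k,t/2)\}$; for odd $t$, $\{0,\dots,\min(k,(t-1)/2)\}$ maps to $\{\max(0,k-(t+1)/2),\dots,k\}$.

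Because you work with walks, the case $S=X$ (a closed walk) is covered automatically. This is the situation the paper meets when a fresh link has coincident old ends.

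An equivalent way to package the same computation uses parity-restricted distances. In $KG(2k+1,k)$, the shortest even $S$--$X$ walk has length $2(k-i)$ and the shortest odd one has length $2i+1$. Any walk can be lengthened by two by stepping back and forth along an edge. So a walk of length $t$ exists if and only if $t$ is at least the shortest length of its parity, and your sets $A_t$ encode precisely this.

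The citation buys brevity; your argument buys independence from the reference at the cost of a short induction.
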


Let $\eta(G)$ be the number of $3^+$-vertices of $G$. Assume that
\cref{thm:main} is false. Following Choi~\cite[Section~3]{Choi}, choose a
counterexample $G_k$ by the following four-stage rule: first minimize
$\eta(G_k)$; subject to this, minimize the number of strong threads; then
maximize the sum of their lengths; and finally minimize $|E(G_k)|$.
Repeatedly delete vertices of degree one and choose an uncolorable component
of the remaining graph. Denote this component by $G_0$. As in Choi's
construction, $G_0$ is connected, has minimum degree at least two, and is not
a cycle. We shall use the first stage of the choice in the following form:
any graph satisfying the two hypotheses of \cref{thm:main} and having fewer
$3^+$-vertices than $G_k$ is colorable.

Choi's structural reductions apply directly to this choice
\cite[Lemmas~3.1 and~3.2]{Choi}.

\begin{lemma}[Choi's structural reductions~\cite{Choi}]\label{lem:structure}
The graph $G_0$ has no $2k$-thread and no pendent cycle.
\end{lemma}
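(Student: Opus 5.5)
Both assertions will be proved by contradiction: we reduce $G_0$ to a graph with the same two hypotheses but fewer $3^+$-vertices or fewer edges, color it by the choice of $G_k$, and extend the coloring back using \cref{lem:thread}. Two preliminary facts about the reduced graph $G'$ are used each time. First, deleting vertices or edges cannot increase $\mad$. Second, deleting cannot create odd cycles, so $\og$ cannot drop. The only care needed is in the stage of the minimality order where the reduction lands: $\eta$ must decrease, or $\eta$ and the strong-thread count must stay the same while $|E|$ drops. Colorability of $G_0$ is equivalent to colorability of $G_k$, since the pendent trees removed when forming $G_0$ can be re-colored greedily: each vertex has one colored neighbour, and some $k$-set is disjoint from any given $k$-set because $|U|=2k+1$.

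\emph{No $2k$-thread.} Suppose $P$ is a $t$-thread with $t\ge 2k$, with ends $x,y$. Take $P$ maximal, and note that $P$ is not the whole component, since $G_0$ is not a cycle.

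\begin{itemize}
\item If $x\ne y$, delete the internal vertices of $P$ to obtain $G'$. Then $G'$ satisfies both hypotheses and is smaller in the chosen order. Either $\eta$ drops, when $x$ or $y$ loses degree to $2$, or, if the counts of $3^+$-vertices and strong threads are preserved, $|E|$ drops. In the delicate case where $x$ or $y$ becomes a degree-$2$ vertex, two threads merge and $\eta$ strictly drops, so minimality still applies.
\item If $x=y$, the thread closes into a pendent cycle, which falls under the second assertion.
\end{itemize}

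In either case $G'$ has a $(2k+1:k)$-coloring. By \cref{lem:thread}, any two end colors are $t$-compatible when $t\ge 2k$, so the coloring extends along $P$, contradicting uncolorability.

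\emph{No pendent cycle.} Suppose $C$ is a cycle through exactly one $3^+$-vertex $v$, and let $C$ have length $\ell$. Since $G_0$ is not a cycle, $v$ has a neighbour outside $C$. Delete $C-v$ to get $G'$; this lowers $|E|$ and does not increase $\eta$. So $G'$ is colorable, say with $v$ colored $S$. Extending amounts to coloring an $\ell$-thread whose two ends both carry the color $S$, so by \cref{lem:thread} we need $(S,S)$ to be $\ell$-compatible.

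\begin{itemize}
\item If $\ell$ is even, $|S\cap S|=k\ge k-\ell/2$, so the pair is compatible.
\item If $\ell$ is odd, then $\ell\ge 2k+1$ by the odd girth hypothesis, and $(\ell-1)/2\ge k=|S\cap S|$, so the pair is again compatible.
\end{itemize}

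Hence $G_0$ would be colorable, a contradiction.

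The main obstacle is the bookkeeping of the four-stage minimality, in particular checking that each reduction does not increase the number of strong threads in a way that defeats the order. I would handle this by preferring reductions that strictly lower $\eta$ whenever a $3^+$-vertex drops to degree $2$. When $\eta$ is unchanged, I would argue that the strong threads of $G'$ are exactly those of $G_0$ other than the removed one, and otherwise invoke the final stage, fewer edges, exactly as in Choi's Lemmas~3.1 and~3.2.
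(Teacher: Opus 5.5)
Your proof is correct. The paper gives no argument for this lemma: it quotes Choi's Lemmas~3.1 and~3.2 and asserts that they apply to the four-stage choice of $G_k$. You supply that verification directly by the standard delete-and-extend reduction. You remove the interior of the long thread or of the pendent cycle, color the smaller graph by minimality, and extend with \cref{lem:thread}, using $\og(G_0)\ge 2k+1$ when the pendent cycle is odd. The citation buys brevity; your version makes the fit with the four-stage ordering explicit.

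Three bookkeeping statements should be tightened.
\begin{itemize}
\item In the thread case with $\eta$ unchanged, the decrease happens at the second stage, because exactly one strong thread disappears. The alternative in your first bullet, ``strong threads preserved and $|E|$ drops,'' never occurs.
\item In the pendent-cycle case with $\deg v\ge5$, say that $v$ stays a $3^+$-vertex, so the strong threads and their total length are unchanged, before appealing to fewer edges. Your stated criterion skips the third stage.
\item You compare $G'$ with $G_0$ rather than with $G_k$. This is legitimate because $G_0$ is no larger than $G_k$ in the order. Either $\eta(G_0)<\eta(G_k)$, or the pruning removed no $3^+$-vertex and no strong thread, in which case the first three stages agree and $|E(G_0)|\le|E(G_k)|$.
\end{itemize}
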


Consequently, every maximal thread has length at most $2k-1$. Moreover,
every degree-two vertex lies on a unique strong maximal thread: extend the
corresponding degree-two chain in both directions; connectedness and
$\delta(G_0)\ge2$ force two $3^+$-ends, and the ends are distinct because
$G_0$ has no pendent cycle.

For a graph $H$, define its \emph{potential} by
\[
 \rho(H)=(2k+1)|V(H)|-2k|E(H)|.
\]
Since $\rho(H)$ is an integer, $\mad(G_0)<(2k+1)/k$ is equivalent to
$\rho(H)\ge1$ for every nonempty subgraph $H\subseteq G_0$.

Let $v\in V(G_0)$ have degree $d\ge3$. Let $T_1,\ldots,T_d$ be the strong
maximal threads incident with $v$, let $t_i=|E(T_i)|$, and let $x_i$ be
the other end of $T_i$. The vertices $x_i$ need not be distinct. After
$v$ and the internal vertices of these threads are deleted, the remaining
vertices are called \emph{old vertices}. A \emph{fresh link} of length
$L$ between two old vertices is a path of length $L$ whose internal
vertices are new. If its two old ends coincide, it is a cycle of length
$L$ meeting the old graph only at that vertex. Different fresh links have
disjoint sets of new internal vertices.

A fresh link adds $L-1$ vertices and $L$ edges, so its contribution to the
potential is $L-(2k+1)$. A link with coincident old ends has length at least
three. Indeed, a link of length two would come from two distinct
length-one threads with the same ends, which is impossible in a simple
graph.

The next two lemmas are the full-maximal-thread forms of Choi's star
and triangular replacements. We quote them without proof. The first is
Choi~\cite[Lemma~3.4]{Choi}.

\begin{lemma}[Choi's star replacement~\cite{Choi}]\label{lem:star}
Delete $v$ and the internal vertices of $T_1,\ldots,T_d$. For every
$i\in\{2,\ldots,d\}$, add a fresh link of length $t_1+t_i$ between $x_1$
and $x_i$, and let $R$ be the resulting graph. Then
\[
 \og(R)\ge\og(G_0),\qquad \mad(R)<2+\frac1k,
\]
and $R$ has a $(2k+1:k)$-coloring.
\end{lemma}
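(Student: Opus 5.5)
We verify the three claims about $R$ in turn: odd girth, maximum average degree, and colorability. Colorability will follow from the minimality of $G_k$ once the first two hypotheses are in place.

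\emph{Odd girth.} Every closed walk in $R$ that uses fresh links can be mapped back to a closed walk in $G_0$ of the same length. Replace each traversal of the fresh link $x_1$--$x_i$ (length $t_1+t_i$) by the walk $x_1 T_1 v T_i x_i$ in $G_0$, which has exactly the same length. Hence every odd cycle of $R$ yields an odd closed walk of the same length in $G_0$. Every odd closed walk contains an odd cycle no longer than itself, so $\og(R)\ge\og(G_0)\ge 2k+1$.

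\emph{Maximum average degree.} Since $\rho$ is integral, it suffices to show $\rho(H')\ge1$ for every nonempty $H'\subseteq R$. Let $I\subseteq\{2,\dots,d\}$ be the set of indices whose fresh links meet $H'$. We may assume each such link lies entirely in $H'$, because a partial link only adds pendant paths, each contributing $+1$ per vertex. We then compare $H'$ with the subgraph $H\subseteq G_0$ obtained by replacing these links with $v$ together with the threads $T_1$ and $T_i$, $i\in I$. The subgraph $H'$ contributes $\sum_{i\in I}(t_1+t_i-(2k+1))$ beyond the old part. The subgraph $H$ contributes $(2k+1)-2k\cdot 0+\sum_{i\in I\cup\{1\}}(t_i-(2k+1))$ when $I\ne\varnothing$, counting one new vertex $v$ and, for each thread, $t_i-1$ vertices and $t_i$ edges. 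The difference is
\[
\rho(H')-\rho(H)=(|I|-1)\,(t_1-(2k+1))\cdot 1+\bigl((2k+1)-(2k+1)\bigr),
\]
that is, $(|I|-1)(t_1-2k-1)$ up to this bookkeeping. Because $t_1\le 2k-1$ by \cref{lem:structure}, this is negative when $|I|\ge2$. So the naive comparison fails, and this is the main obstacle.

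To overcome it, we choose $H$ more cleverly. When $|I|\ge2$ we may instead compare with the old part alone and use that each link has length at least $t_1+t_i\ge2$. We then need the old part $H_0$ with its links to satisfy $\rho\ge1$, and this requires a potential lower bound for subgraphs of $G_0$ containing the $x_i$ of the form $\rho(H_0)\ge$ something larger than $1$. This is exactly the type of gap lemma in Choi's Section~3, where the lengths of the maximal threads and the minimality of $G_k$ give extra potential.

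\emph{Colorability.} The graph $R$ has $v$ removed and no new $3^+$-vertices. Every $x_i$ keeps its degree, since one thread is replaced by one link, except that $x_1$ gains $d-2$ links; degree only matters via the count. Hence $\eta(R)<\eta(G_0)\le\eta(G_k)$, and by the first stage of the choice $R$ is colorable.

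Since the statement is quoted from Choi's Lemma~3.4, in practice we would cite it, checking only that our ordering of the counterexample matches Choi's.
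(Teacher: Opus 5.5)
\textbf{Genuine gap: the maximum-average-degree step.} Your odd-girth argument and your count of $3^+$-vertices are fine. The problem is the potential comparison, where a subtraction error leads you to abandon a comparison that actually works. Let $B$ be the old part. The two contributions you wrote down are both correct:
\[
\rho(H')-\rho(B)=|I|\,t_1+\sum_{i\in I}t_i-(2k+1)|I|,\qquad
\rho(H)-\rho(B)=(2k+1)+\sum_{j\in I\cup\{1\}}\bigl(t_j-(2k+1)\bigr)=t_1+\sum_{i\in I}t_i-(2k+1)|I|.
\]
So $\rho(H')-\rho(H)=(|I|-1)\,t_1\ge0$, not $(|I|-1)(t_1-2k-1)$. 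Each link's $-(2k+1)$ cancels against that of the arm $T_i$, and the extra arm $T_1$'s $-(2k+1)$ cancels against the $+(2k+1)$ for the vertex $v$. This is the star case of the identity \eqref{eq:triangle-star-diff} in the proof of \cref{lem:triangle-star}. There the surviving links form a tree, so the cyclomatic term vanishes and only $(d_{\Lambda'}(1)-1)t_1$ remains. The naive comparison therefore gives $\rho(H')\ge\rho(H)\ge1$ directly.

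\textbf{Consequences for the rest of the proof.} Having concluded that the comparison fails, you fall back on an unspecified ``gap lemma'' giving extra potential to subgraphs of $G_0$ that contain the $x_i$. No such statement is formulated or proved in your proposal, and none is used in the paper. As written, $\mad(R)<2+1/k$ is therefore not established. Your colorability claim inherits the gap: minimality applies to $R$ only after $R$ is known to satisfy both hypotheses of \cref{thm:main}.

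\textbf{Fix.} Replace that paragraph with the corrected computation and add the two trivial cases. If $I=\varnothing$, the pruned subgraph lies in $G_0$. If pruning partial links leaves nothing, the potential is positive anyway, since every peeled or isolated fresh piece contributes positively.
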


The second is Choi~\cite[Lemma~3.5]{Choi}.

\begin{lemma}[Choi's triangular replacement~\cite{Choi}]\label{lem:triangle}
Suppose that $d=3$ and $t_1+t_2+t_3>2k+1$. Delete $v$ and the internal
vertices of $T_1,T_2,T_3$, and add a fresh link of length $t_i+t_j$ between
$x_i$ and $x_j$ for every $1\le i<j\le3$. If $R$ is the resulting graph,
then
\[
 \og(R)\ge\og(G_0),\qquad \mad(R)<2+\frac1k,
\]
and $R$ has a $(2k+1:k)$-coloring.
\end{lemma}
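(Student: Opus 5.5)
The paper quotes \cref{lem:triangle} from Choi~\cite[Lemma~3.5]{Choi}, so no proof is needed here. If one wanted to verify it, I would check its three conclusions in order: odd girth, then potential, then colorability.

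\textbf{Odd girth.} Each fresh link of length $t_i+t_j$ replaces the path $x_i T_i v T_j x_j$ of the same length. So any closed walk in $R$ maps to a closed walk in $G_0$ of the same length: traverse $T_i\cup T_j$ in place of each link. An odd closed walk contains an odd cycle no longer than itself. Hence every odd cycle of $R$ yields an odd cycle of $G_0$ of length at most its own, which gives $\og(R)\ge\og(G_0)$.

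\textbf{Potential.} Take a subgraph $H\subseteq R$ and let $H_0$ be its old part. Each link $\ell$ used by $H$ contributes $L_\ell-(2k+1)$ when taken whole. If $H$ contains only part of a link, extending or shrinking to the whole link or its old part changes $\rho$ favorably, so we may assume links are taken whole. Now compare $H$ with the subgraph $H'\subseteq G_0$ consisting of $H_0$, together with $v$ and the threads $T_i$ touched by the used links.
\begin{itemize}
\item With all three links used: $\rho(H)-\rho(H_0)=2(t_1+t_2+t_3)-3(2k+1)$, while $\rho(H')-\rho(H_0)=(2k+1)+(t_1+t_2+t_3)-3(2k+1)$.
\item So $\rho(H)=\rho(H')+(t_1+t_2+t_3)-(2k+1)$.
\item This is at least $\rho(H')+1\ge 2$, using the hypothesis $t_1+t_2+t_3>2k+1$.
\item With one or two links, $v$ has degree one or two in $H'$. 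The analogous count gives $\rho(H)\ge\rho(H')$ up to nonnegative terms.
\end{itemize}
Hence $\rho(H)\ge1$ and $\mad(R)<2+1/k$.

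\textbf{Colorability.} $R$ has the same $3^+$-vertices as $G_0$ except $v$, since the $x_i$ keep their degrees. Hence $R$ satisfies the hypotheses of \cref{thm:main} with fewer $3^+$-vertices than $G_k$. By the minimality of the counterexample, $R$ is colorable.

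The main obstacle is the potential count when links share ends or are only partly used. Coincident ends are the cases where the $x_i$ coincide, and then links become pendent cycles. There I would check that pendent-cycle links have length at least three, as the paper notes, so the counting still goes through.
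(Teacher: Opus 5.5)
Your proposal is correct and follows essentially the same route the paper uses in its proof of the triangle--star extension, of which this lemma is the $d=3$ case. Both arguments replace each link by $T_i\cup T_j$ to obtain odd closed walks in $G_0$, reduce to subgraphs that use whole links, and compare $\rho(H)$ with the subgraph $H'\subseteq G_0$ obtained by restoring $v$ and the touched threads, so that $\rho(H)-\rho(H')=t_1+t_2+t_3-(2k+1)$ when all three links are present; colorability then follows from the drop in the number of $3^+$-vertices.

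Two small slips should be fixed. First, the far ends $x_i$ do not keep their degrees: each thread edge at $x_i$ is replaced by two link ends, so the degree of $x_i$ goes up. This is harmless, since the $x_i$ were already $3^+$-vertices as ends of strong threads. Second, when exactly two links are used, all three threads are touched, so $v$ has degree three in $H'$ and $\rho(H)=\rho(H')+t_i$ for the index $i$ shared by the two links.
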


The repeated-end convention above is a minor extension of Choi's statements.
Replacing a used fresh cycle by the two old arms through $v$ preserves length
and parity. Moreover, the numbers of new vertices and edges in a fresh link,
and the numbers of vertices and edges restored with any indexed family of
old arms, do not depend on whether some retained far ends coincide. Hence the
potential identities in Choi's proofs are unchanged. The odd-girth and
maximum-average-degree arguments in
\cite[Lemmas~3.4 and~3.5]{Choi} therefore apply under this convention.

For the local reduction we also need one extension of the triangular
replacement. The proof follows Choi's odd-walk and potential arguments
\cite[Lemmas~3.4 and~3.5]{Choi}, but the replacement graph has additional
leaves at one vertex of the triangle.

\begin{lemma}[Triangle--star extension]\label{lem:triangle-star}
Assume that $d\ge4$ and $t_1+t_2+t_3>2k+1$. Let $\Lambda$ be the graph on
$[d]$ with edge set
\[
 E(\Lambda)=\{12,13,23\}\cup\{1i:4\le i\le d\}.
\]
Delete $v$ and the internal vertices of all $T_i$. For every
$ij\in E(\Lambda)$, add a fresh link of length $t_i+t_j$ between $x_i$ and
$x_j$, and let $R$ be the resulting graph. Then
\[
 \og(R)\ge\og(G_0),\qquad \mad(R)<2+\frac1k,
\]
and $R$ has a $(2k+1:k)$-coloring.
\end{lemma}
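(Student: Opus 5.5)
We verify the three conclusions of the lemma in turn. The first two are checked directly, following the odd-walk and potential arguments of \cref{lem:star,lem:triangle}. Colorability then follows from minimality.

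\textbf{Colorability.} Observe first that $R$ has fewer $3^+$-vertices than $G_0$. The vertex $v$ has been deleted, and every old vertex $x_i$ keeps its degree. Indeed, each $x_i$ lost one arm $T_i$ but gains at least one fresh link, since every index of $[d]$ is incident with an edge of $\Lambda$. Moreover, all new internal vertices have degree two. Hence $\eta(R)<\eta(G_0)\le\eta(G_k)$. By the first stage of the choice of $G_k$, any graph satisfying the two hypotheses of \cref{thm:main} is colorable. It therefore suffices to prove $\og(R)\ge\og(G_0)\ge2k+1$ and $\mad(R)<2+1/k$.

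\textbf{Odd girth.} Let $C$ be an odd cycle in $R$. Replace every fresh link $x_ix_j$ used by $C$ with the walk $x_i\,T_i\,v\,T_j\,x_j$ in $G_0$; this walk has the same length $t_i+t_j$. The result is a closed walk in $G_0$ of the same odd length. Every odd closed walk contains an odd cycle no longer than itself, so $|C|\ge\og(G_0)$. The repeated-end convention is handled exactly as remarked after \cref{lem:triangle}.

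\textbf{Maximum average degree.} This is the step I expect to be the main obstacle. Suppose $H\subseteq R$ has $\rho(H)\le0$, and take $H$ minimal, so that $H$ contains every fresh link it partially meets. Let $F\subseteq E(\Lambda)$ be the set of links used by $H$, let $H_0$ be the old part of $H$, and let $I$ be the set of indices spanned by $F$. Since each link contributes $L-(2k+1)$ to the potential,
\[
\rho(H)=\rho(H_0)+\sum_{ij\in F}\bigl(t_i+t_j-(2k+1)\bigr).
\]
Compare this with the subgraph $H'=H_0\cup\{v\}\cup\bigcup_{i\in I}T_i$ of $G_0$. Its potential is
\[
\rho(H')=\rho(H_0)+(2k+1)+\sum_{i\in I}\bigl(t_i-(2k+1)\bigr)
\]
(one new vertex $v$, and $t_i-1$ vertices with $t_i$ edges per arm), and $\rho(H')\ge1$. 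I then show that $\rho(H)\ge\rho(H')$, or at least $\rho(H)\ge1$, by a case analysis on $F$. Since $F$ is a subgraph of $\Lambda$, it is either a star (possibly with the triangle edge $23$) or contains the triangle. If $F$ contains no triangle, $F$ is a forest; the difference is then controlled by the star computation of \cref{lem:star}, where each leaf $i$ is counted once and the centre contributes $t_1$ repeatedly. Here I use $t_i\le 2k-1$ from \cref{lem:structure}, so that each extra term $t_1-(2k+1)$ is compensated appropriately, exactly as in Choi's star argument. If $F$ contains the triangle $123$, the triangle edges give $2(t_1+t_2+t_3)-3(2k+1)$, versus $t_1+t_2+t_3-2(2k+1)$ in $H'$. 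The surplus is $t_1+t_2+t_3-(2k+1)\ge1$, by the hypothesis $t_1+t_2+t_3>2k+1$. The remaining star edges $1i$ are treated as in the forest case. In the subcase where $F$ is a single cycle formed by the triangle, the excess $t_1+t_2+t_3-(2k+1)$ already yields $\rho(H)\ge\rho(H')+1$. The delicate point is the bookkeeping when several $x_i$ coincide and $H_0$ is empty or tiny. I would treat $H_0=\varnothing$ separately, where $H$ consists only of fresh cycles through a common vertex: each such cycle contributes potential at least one less than a pendent configuration, and \cref{lem:structure} together with link lengths at least $3$ rules out $\rho\le0$.
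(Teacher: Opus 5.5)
Your overall route is the paper's. You prune to a subgraph in which fresh links survive in full, compare it with the subgraph $H'\subseteq G_0$ obtained by restoring $v$ and the arms indexed by $I$, and split according to whether the link graph $F$ contains the triangle. Your triangle computation, with surplus $t_1+t_2+t_3-(2k+1)\ge1$, is correct.

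\textbf{The forest case.} This case, which you identified as the main obstacle, is not actually proved, and the mechanism you describe for it is wrong. There is no negative term $t_1-(2k+1)$ to compensate, and $t_i\le2k-1$ plays no role; it would only make such a term more negative. Subtracting your own two displayed formulas gives
\[
 \rho(H)-\rho(H')=\sum_{i\in I}\bigl(d_F(i)-1\bigr)t_i-(2k+1)\bigl(|F|-|I|+1\bigr).
\]
For a forest with $c$ components the last bracket equals $1-c\le0$, so every term is nonnegative and $\rho(H)\ge\rho(H')\ge1$. In particular each additional star edge $1i$ contributes $+t_1$: the link adds $t_1+t_i-(2k+1)$, while the arm $T_i$ adds $t_i-(2k+1)$. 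Your ``$t_1-(2k+1)$'' forgets that $H'$ contains $v$, worth $2k+1$, only once. The same identity also handles the star edges attached to the triangle, where the bracket equals $1$.

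This computation cannot be replaced by a citation. When $F$ avoids the edge $23$ you could cite \cref{lem:star}, since $H$ is then a subgraph of the star replacement graph with center $1$. But forests such as $F=\{23,14\}$ or $F=\{12,23,14\}$ lie in no star and are covered by no quoted lemma.

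\textbf{The final paragraph.} It is unnecessary and partly confused. Each used link enters $H$ together with its old ends, so $H_0=\varnothing$ forces $H=\varnothing$; that case is vacuous. Coincidences among the $x_i$ need no separate treatment either. The subgraph $H'$ is always a nonempty subgraph of $G_0$, so $\rho(H')\ge1$ holds regardless, exactly as the paper uses it. No appeal to \cref{lem:structure} or to link lengths is needed.

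\textbf{The colorability step.} The degrees of $x_1,x_2,x_3$ actually increase rather than stay the same. What you need is that every $x_i$ is already a $3^+$-vertex, being an end of a strong thread, so no new $3^+$-vertex is created.
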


\begin{proof}
The convention for coincident far ends is the one fixed above. As in Choi's
replacement proofs~\cite[Lemmas~3.4 and~3.5]{Choi}, replace every fresh link
on an odd cycle of $R$ by the two old threads through $v$. This preserves
length and parity and produces an odd closed walk in $G_0$. Hence
$\og(R)\ge\og(G_0)$.

Let $F$ be a nonempty subgraph of $R$. Repeatedly delete a fresh internal
vertex whose current degree is at most one, and let $K$ be the graph left.
Deleting a vertex of current degree $e\in\{0,1\}$ lowers the potential by
$(2k+1)-2ke\ge1$, so $\rho(F)\ge\rho(K)$, with strict inequality if
$K$ is empty. Every fresh link surviving in $K$ occurs in full. If $K$ is
empty, then $\rho(F)\ge1$. If $K$ is nonempty and has no fresh link, then
$K\subseteq G_0$, and again $\rho(F)\ge\rho(K)\ge1$.

Suppose that $K$ contains a fresh link. Let $\Lambda'\subseteq\Lambda$ be
the subgraph formed by the surviving links. Delete their edges and fresh
internal vertices from $K$, retaining the old ends, and call the remaining
graph $B$. Let $H\subseteq G_0$ be obtained from $B$ by adding $v$ and the
old threads indexed by $V(\Lambda')$. If $d_{\Lambda'}(i)$ denotes the
degree of $i$ in $\Lambda'$, then the same bookkeeping used by
Choi~\cite[Lemmas~3.4 and~3.5]{Choi} gives
\begin{align}
 \rho(K)-\rho(B)
 &=\sum_{i\in V(\Lambda')}d_{\Lambda'}(i)t_i
   -(2k+1)|E(\Lambda')|,\notag\\
 \rho(H)-\rho(B)
 &=\sum_{i\in V(\Lambda')}t_i
   -(2k+1)(|V(\Lambda')|-1).\notag
\end{align}
Therefore
\begin{equation}\label{eq:triangle-star-diff}
 \rho(K)-\rho(H)
 =\sum_{i\in V(\Lambda')}(d_{\Lambda'}(i)-1)t_i
 -(2k+1)(|E(\Lambda')|-|V(\Lambda')|+1).
\end{equation}
If $\Lambda'$ is a forest with $c$ nonempty components, then
$|E(\Lambda')|-|V(\Lambda')|+1=1-c\le0$, while each coefficient
$d_{\Lambda'}(i)-1$ is nonnegative. If $\Lambda'$ contains a cycle, then it
contains the triangle on $\{1,2,3\}$; every other edge is incident with
vertex $1$, so $\Lambda'$ is connected and its cyclomatic number is one.
The first sum in \eqref{eq:triangle-star-diff} is then at least
$t_1+t_2+t_3>2k+1$. Thus $\rho(K)\ge\rho(H)\ge1$ in all cases. Hence every
nonempty subgraph of $R$ has positive potential, and
$\mad(R)<2+1/k$.

The replacement removes the $3^+$-vertex $v$ and creates no new
$3^+$-vertex. Thus $R$ has fewer $3^+$-vertices than $G_k$. By the first
stage of Choi's minimum-counterexample choice~\cite[Section~3]{Choi}, $R$ is
colorable.
\end{proof}

\section{Set-system lemmas}\label{sec:setlemmas}

We now translate the extension problem at one vertex into a system of set
quotas. Let the incident maximal threads have lengths
$t_1,\ldots,t_d$, where $1\le t_i\le2k-1$. Let
$p_i\in\{0,1\}$ be defined by $p_i\equiv t_i\pmod 2$, and put
$q_i=k-\lfloor t_i/2\rfloor$. Then $1\le q_i\le k$ and
$t_i=2(k-q_i)+p_i$.

Suppose that the far end of the $i$th thread has color $X_i$. For
$A\subseteq U$, write $\mathbf 1_A$ for its characteristic vector. Put
$A_i=X_i$ when $p_i=0$, and put $A_i=U\setminus X_i$ when $p_i=1$.
Thus $|A_i|=k+p_i$. By \cref{lem:thread}, a color
$S\in\binom{U}{k}$ extends over the $i$th thread if and only if
\begin{equation}\label{eq:quotaform}
 |S\cap A_i|\ge q_i.
\end{equation}
If $X_i$ and $X_j$ are $(t_i+t_j)$-compatible, then
\begin{equation}\label{eq:distancebound}
 d_H(\mathbf 1_{A_i},\mathbf 1_{A_j})\le t_i+t_j.
\end{equation}
Indeed, when $p_i=p_j$, this is the even compatibility inequality in
\eqref{eq:compatible}; when $p_i\ne p_j$, it is the odd compatibility
inequality. Since $|A_i|=k+p_i$, \eqref{eq:distancebound} gives
\begin{equation}\label{eq:pairoverlap}
 |A_i\cap A_j|
 =\frac{|A_i|+|A_j|-d_H(\mathbf 1_{A_i},\mathbf 1_{A_j})}{2}
 \ge q_i+q_j-k.
\end{equation}

The same parameters describe the configurations that would receive too
little charge in the final discharging argument. Every vertex starts with
its degree, and each vertex of degree at least three sends $1/(2k)$ to
each internal degree-two vertex on an incident maximal thread. By
\cref{lem:structure}, a degree-$d$ vertex is incident with exactly $d$
maximal threads when they are counted by their first edges. If their
lengths are $t_1,\ldots,t_d$, its final charge is
\begin{equation}\label{eq:charge}
 d-\frac1{2k}\sum_{i=1}^d(t_i-1).
\end{equation}
This number is less than $2+1/k$ if and only if
\begin{equation}\label{eq:deficient}
 \sum_{i=1}^d t_i\ge(2k+1)d-4k-1.
\end{equation}
A list satisfying \eqref{eq:deficient} is called \emph{deficient}. Since
$t_i=2(k-q_i)+p_i$, condition \eqref{eq:deficient} is equivalent to
\begin{equation}\label{eq:totalquota}
 2\sum_{i=1}^d q_i+|\{i:p_i=0\}|\le4k+1.
\end{equation}
In particular, $d\le\sum_i q_i\le2k$.

Only the inequalities with $q_i+q_j>k$ require a replacement. The
\emph{constraint graph} is the graph $J$ on $[d]$ in which
$ij\in E(J)$ if and only if $q_i+q_j>k$. It has no two disjoint edges,
for two such edges would involve four indices whose $q_i$-values have sum
greater than $2k$. Hence the nonisolated part of $J$ is a star or a
triangle. To see this, take two adjacent edges $ab$ and $ac$. Every edge
avoiding $a$ must meet both of them, and therefore must be $bc$.

We need two set-theoretic lemmas. For $x\in\{0,1\}^m$, its
\emph{weight} is the number of coordinates equal to one. The first lemma
gives a common point for Hamming balls of different radii.

\begin{lemma}[Hamming-ball lemma]\label{lem:hamming}
Let $x_1,\ldots,x_d\in\{0,1\}^m$, and let $1\le a_i\le m$. Assume that
\begin{align}
 d_H(x_i,x_j)&\le2m-a_i-a_j &&(i\ne j),\label{eq:hpair}\\
 \sum_{i=1}^d a_i&\le2m-1.\label{eq:htotal}
\end{align}
Then there is a vector $y\in\{0,1\}^m$ such that
$d_H(y,x_i)\le m-a_i$ for every $i$.
\end{lemma}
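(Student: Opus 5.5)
We plan to argue by induction on $m$, fixing one coordinate of $y$ at a time. Write $r_i=m-a_i$ for the radii, so $0\le r_i\le m-1$. In this notation \eqref{eq:hpair} says $d_H(x_i,x_j)\le r_i+r_j$, and \eqref{eq:htotal} says $\sum_i(m-r_i)\le 2m-1$.

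\emph{Base cases and two remarks.} For $d=1$ take $y=x_1$. For $d=2$, walk from $x_1$ towards $x_2$ along a geodesic for $\min(r_1,d_H(x_1,x_2))$ steps; this point lies in both balls. The same observation shows that the pairwise condition alone settles every system whose balls are "nested along a line". Also note two facts from \eqref{eq:htotal}:
\begin{itemize}
\item at most one index has $a_i=m$ (radius $0$), since two such would give $\sum a_i\ge 2m$;
\item an index with $a_i=1$ (radius $m-1$) excludes only the antipode of $x_i$.
\end{itemize}

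\emph{Inductive step.} Choose a coordinate $c$ and a value $y_c\in\{0,1\}$, and pass to the remaining $m-1$ coordinates. The indices split into two groups.
\begin{itemize}
\item If $x_i$ agrees with $y_c$ at $c$, its radius is unchanged, so $a_i'=a_i-1$.
\item If $x_i$ disagrees, its radius drops by one, so $a_i'=a_i$.
\end{itemize}
An index with $a_i'=0$ now has a ball equal to the whole cube $\{0,1\}^{m-1}$, so we drop it. If some $a_i=m$, we choose $y_c=(x_i)_c$; this keeps $a_i'\le m-1$.

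The total condition becomes $\sum a_i'=\sum a_i-\#\{\text{agreeing}\}$. This is at most $2(m-1)-1$ as soon as at least two indices agree with $y_c$; taking $y_c$ to be the majority value gives this once $d\ge3$. Dropped indices only help.

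Now check \eqref{eq:hpair} in dimension $m-1$, pair by pair.
\begin{itemize}
\item Both agree: $d_H$ is unchanged and the right-hand side grows by $2$ relative to $2(m-1)$, so the condition holds.
\item One agrees, one disagrees: $d_H$ drops by one and the right-hand side drops by one, so the condition holds.
\item Both disagree: $d_H$ is unchanged but the right-hand side drops by $2$.
\end{itemize}
So the only failure is a pair $i,j$ that both disagree with $y_c$ and is \emph{nearly tight}, meaning $d_H(x_i,x_j)\ge r_i+r_j-1$.

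\emph{Choosing $c$ (expected main obstacle).} The difficulty is to pick $(c,y_c)$ so that no nearly tight pair lies entirely in the minority at $c$. What I would try first:
\begin{itemize}
\item If some coordinate has all $x_i$ equal, use it. Then nobody disagrees, and the step reduces every $a_i$ by one.
\item Otherwise, look at the nearly tight pairs. If $x_i,x_j$ differ at $c$, they cannot both disagree with $y_c$. So we want a coordinate $c$ that separates every nearly tight pair whose common value at $c$ is a minority value.
\end{itemize}

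To find such a $c$, use the sum condition. Suppose $i,j$ are nearly tight. Then $d_H(x_i,x_j)\ge r_i+r_j-1=2m-a_i-a_j-1$, which is large when $a_i+a_j$ is small. In that case the two vectors disagree on most coordinates, and a coordinate where they agree is essentially the only bad choice.

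When the $a_i$ are large, \eqref{eq:htotal} forces $d$ to be small. Then the configuration is close to the $d=2$ or $d=3$ case, which I would handle directly with a median-type construction. For three centers, take the coordinatewise majority of $x_1,x_2,x_3$ and move it towards the center with the smallest radius.

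I expect the counting that guarantees a good coordinate, or else falls into one of these small explicit cases, to be the heart of the proof. If it resists, the fallback is to replace the coordinate induction by a direct construction: start from $y=x_{i_0}$ for the index with the largest $a_{i_0}$, and greedily flip coordinates towards the other centers. In that version the budget $\sum a_i\le 2m-1$ has to be spent exactly, and \eqref{eq:hpair} guarantees that each individual target can be reached.
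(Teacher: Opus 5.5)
Your induction setup is correct and is the same as the paper's. The bookkeeping $a_i'=a_i-1$ for indices agreeing with $y_c$ and $a_i'=a_i$ otherwise is right. So is the observation that the total condition survives once at least two indices agree. You also correctly identify the only possible failure: a pair $i,j$ with $\sigma_{ij}=2m-a_i-a_j-d_H(x_i,x_j)\le1$ lying entirely in the disagreeing class. However, the proposal stops at the step that carries the proof, and you never show that a valid choice exists. The alternatives you offer instead are not verified: searching for a special separating coordinate, a median construction for three centers, and a greedy fallback. As written, there is a genuine gap.

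The missing idea is a short use of \eqref{eq:htotal}. It also shows that no special coordinate is needed: every coordinate $c$ works, and only the value $y_c$ has to be chosen.
\begin{itemize}
\item If $x_i,x_j$ agree at $c$, then $d_H(x_i,x_j)\le m-1$, so $\sigma_{ij}\le1$ forces $a_i+a_j\ge m$.
\item Let $I_b=\{i:(x_i)_c=b\}$. If both $I_0$ and $I_1$ contained such a nearly tight pair, the two pairs would be disjoint, because the classes are disjoint. They would then give $\sum_i a_i\ge 2m$, contradicting \eqref{eq:htotal}.
\item Hence at most one class contains a nearly tight pair. Set $y_c$ to that class's value. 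That class has at least two indices, so $\sum_i a_i'\le 2(m-1)-1$ holds as well.
\item If neither class contains a nearly tight pair, take the majority value as you proposed; this is where $d\ge3$ is used.
\end{itemize}
Your remark that nearly tight pairs with small $a_i+a_j$ disagree almost everywhere was heading in this direction, but the disjointness-plus-budget count is what closes the argument.

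Separately, your treatment of $a_i=m$ inside the inductive step does not work. If $x_i$ is the only index agreeing with $y_c$ and $\sum_{j\ne i}a_j=m-1$, the reduced total is $2(m-1)$, and the induction hypothesis fails. Dispose of this case directly, as the paper does: by \eqref{eq:hpair}, $y=x_i$ already satisfies $d_H(x_i,x_j)\le m-a_j$ for all $j$. Removing it also guarantees $a_i'\le m-1$ for the disagreeing indices in the reduced system.
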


\begin{proof}
We use induction on $m$. The cases $d\le1$ are immediate. If
some $a_i=m$, take $y=x_i$; by condition \eqref{eq:hpair},
$d_H(x_i,x_j)\le m-a_j$ for every $j$. If $d=2$, put $r_i=m-a_i$ and
$D=d_H(x_1,x_2)$. By \eqref{eq:hpair}, $D\le r_1+r_2$. A shortest path
from $x_1$ to $x_2$ in the Hamming cube contains a point at distance
$\min\{r_1,D\}$ from $x_1$, and this point is at distance at most $r_2$
from $x_2$.

Assume that $d\ge3$ and $a_i\le m-1$ for all $i$. Fix a coordinate $r$.
For distinct $i,j$, put
\[
 \sigma_{ij}=2m-a_i-a_j-d_H(x_i,x_j)\ge0.
\]
For $b\in\{0,1\}$, let $I_b=\{i:(x_i)_r=b\}$. At least one of
$I_0$ and $I_1$ contains no pair $ij$ with $\sigma_{ij}\le1$.
Otherwise, choose such a pair in each class. The two pairs are disjoint,
and the two vectors in either pair agree at coordinate $r$, so their
distance is at most $m-1$. It follows that $a_i+a_j\ge m$ for each
pair, contrary to \eqref{eq:htotal}.

We now fix the $r$th coordinate of the desired vector. If it is fixed to
$b$, delete coordinate $r$ and define
\[
 a_i'=\begin{cases}
 a_i-1,&i\in I_b,\\
 a_i,&i\notin I_b.
 \end{cases}
\]
An index with $a_i'=0$ may be discarded, since its reduced ball is the
whole $(m-1)$-dimensional cube. The pairwise slack in the reduced system is
unchanged unless both indices lie in $I_{1-b}$; in that case it is
$\sigma_{ij}-2$. We therefore choose $b$ so that $I_{1-b}$ contains no
pair $ij$ with $\sigma_{ij}\le1$. The preceding paragraph shows that such
a value exists.

Let $A=\sum_i a_i$. We may also require
$A-|I_b|\le2m-3$. This is immediate when $A\le2m-3$. Suppose that
$A=2m-2$. If the chosen class is nonempty, there is nothing to prove. If
it is empty, reverse the value of $b$; the new unchosen class is empty and
the new chosen class is nonempty. Now suppose that $A=2m-1$. If
$|I_b|\ge2$, there is again nothing to prove. Otherwise reverse $b$. The
new unchosen class has at most one index, while the new chosen class has
at least two indices because $d\ge3$. Thus the reduced demands satisfy
\[
 \sum_i a_i'=A-|I_b|\le2m-3=2(m-1)-1.
\]

The induction hypothesis now gives a vector in the reduced cube; if no
positive demand remains, choose any reduced vector. Restore coordinate
$r$ with value $b$. For $i\in I_b$, this coordinate supplies the one
agreement removed from $a_i$, while for $i\notin I_b$ it supplies no
agreement and the demand was unchanged. The restored vector satisfies all
original inequalities.
\end{proof}

The vector supplied by \cref{lem:hamming} need not have the required
weight. We use the following counting bound to correct it. The inequality
is related to lower bounds for block sizes in pairwise balanced designs and
sigma clique partitions; see Davoodi, Javadi, and
Omoomi~\cite{DavoodiJavadiOmoomi}. A family
$B_1,\ldots,B_N$ \emph{covers every pair} of $\Omega$ if every member of
$\binom{\Omega}{2}$ is contained in at least one $B_i$.

\begin{lemma}\label{lem:paircover}
Let $|\Omega|=v\ge2$ and $s\ge1$. Let
$B_1,\ldots,B_N\subsetneq\Omega$ cover every pair of points of $\Omega$.
Let $\varepsilon_i\in\{0,1\}$ and assume
\begin{equation}\label{eq:pcsize}
 s+\varepsilon_i\le |B_i|\le v-s-1+\varepsilon_i
 \qquad(1\le i\le N).
\end{equation}
Then
\begin{equation}\label{eq:pcbound}
 \sum_{i=1}^N(2|B_i|-s-\varepsilon_i)\ge4v-3(s+1).
\end{equation}
\end{lemma}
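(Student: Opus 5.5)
The plan is to prove \eqref{eq:pcbound} by a local counting argument around one well-chosen point, followed by an optimization over block sizes. Throughout, write $b_i=|B_i|$ and $w_i=2b_i-s-\varepsilon_i$. By \eqref{eq:pcsize}, every weight satisfies $w_i\ge b_i\ge s+\varepsilon_i\ge1$. For a point $x$, let $\mathcal B_x=\{i:x\in B_i\}$. Since the $B_i$ cover every pair, the sets $B_i\setminus\{x\}$ with $i\in\mathcal B_x$ cover $\Omega\setminus\{x\}$. Hence
\[
 \sum_{i\in\mathcal B_x}(b_i-1)\ge v-1 .
\]
Because every block is proper, $|\mathcal B_x|\ge2$. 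This lets me rewrite the weight through $x$ as
\[
 \sum_{i\in\mathcal B_x}w_i=2\sum_{i\in\mathcal B_x}(b_i-1)+\sum_{i\in\mathcal B_x}(2-s-\varepsilon_i)\ge 2(v-1)-\sum_{i\in\mathcal B_x}(s+\varepsilon_i-2).
\]

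\emph{First step: the two-block case.} Choose $x$ to minimize $|\mathcal B_x|$, and treat first the case $|\mathcal B_x|=2$, say $\mathcal B_x=\{1,2\}$. The display then gives $w_1+w_2\ge 2v-2s-\varepsilon_1-\varepsilon_2+2$. Put $P=B_1\setminus B_2$ and $Q=B_2\setminus B_1$. The upper bound $b_i\le v-s-1+\varepsilon_i$ forces $|P|,|Q|\ge s-\varepsilon_1-\varepsilon_2+1$ (roughly), since $B_1\cup B_2=\Omega$.

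No pair in $P\times Q$ lies in $B_1$ or $B_2$. So these pairs must be covered by the remaining blocks $B_3,\dots,B_N$. Each such block $B_j$ covers $|B_j\cap P|\cdot|B_j\cap Q|$ of these pairs, at a cost $w_j\ge b_j\ge|B_j\cap P|+|B_j\cap Q|$. I would then show that the remaining weight is at least about $2v-s-5+\varepsilon_1+\varepsilon_2$. For this, fix $p\in P$: the blocks through $p$ must cover all of $Q$, and symmetrically for a point of $Q$. Adding these two covering bounds gives roughly $|P|+|Q|+(\text{overlap with }B_1\cap B_2)$. The lower bounds $b_j\ge s+\varepsilon_j$ supply the missing $s$-terms. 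This yields \eqref{eq:pcbound} with the correct constant $-3(s+1)$, and the extremal configuration should be three blocks forming a "triangle" of pair covers.

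\emph{Second step: $|\mathcal B_x|\ge3$.} Here the minimum choice gives $|\mathcal B_y|\ge3$ for every point $y$. I then double count:
\[
 \sum_i b_i\ge 3v, \qquad\text{so}\qquad \sum_i w_i=\sum_i b_i+\sum_i(b_i-s-\varepsilon_i)\ge 3v.
\]
The extra $v-3s-3$ should come from $\sum_i(b_i-s-\varepsilon_i)$, using $b_i\le v-s-1+\varepsilon_i$ to bound the number of blocks below. Alternatively, I would apply the per-point bound to two points $x,y$ and use the blocks through $x$ or $y$, splitting off the common block containing $xy$.

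\emph{Main obstacle.} The hard part is the exact constant in the two-block case. There the bound must be tight, so the accounting of the $\varepsilon_i$ and the overlap $B_1\cap B_2$ has to be exact rather than asymptotic. If the direct argument gets messy, I would instead run induction on $v$: delete a point lying in few blocks, check that \eqref{eq:pcsize} survives with $v-1$ (possibly after dropping blocks that become too small or shifting some $\varepsilon_i$), and verify that the left side of \eqref{eq:pcbound} drops by at most $4$.
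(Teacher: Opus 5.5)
Your proposal has a genuine gap: neither case is proved, and the routes you suggest for Step~2 cannot work.

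In Step~2 you split $\sum_i w_i=\sum_i b_i+\sum_i(b_i-s-\varepsilon_i)$, keep only $\sum_i b_i\ge3v$, and hope the remaining $v-3s-3$ comes from $\sum_i(b_i-s-\varepsilon_i)$. Take $s=1$, all $\varepsilon_i=1$, and as blocks all $\binom{v}{2}$ pairs of a $v$-set with $v\ge7$. Every hypothesis holds and every point lies in $v-1\ge3$ blocks, yet $\sum_i(b_i-s-\varepsilon_i)=0<v-3s-3$. The lemma survives only because $\sum_i b_i=v(v-1)$ is far above $3v$, i.e.\ the slack sits in the point multiplicities you discarded. A lower bound on $N$ cannot help, since a block of minimum size adds nothing to $\sum_i(b_i-s-\varepsilon_i)$.

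The two-point alternative fails as well. For the Fano plane with $s=3$ and $\varepsilon_i=0$ (so $3\le|B_i|=3\le v-s-1$), the five lines through two points $x,y$ have total weight $15<16=4v-3s-3$. So no bound confined to the blocks through two points reaches \eqref{eq:pcbound}.

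In Step~1 your reduction is exact, since $w_1+w_2=2v+2|B_1\cap B_2|-2s-\varepsilon_1-\varepsilon_2$. What remains is $\sum_{j\ge3}w_j\ge2|P|+2|Q|-s-3+\varepsilon_1+\varepsilon_2$ for blocks covering $P\times Q$. But $b_j\ge s+\varepsilon_j$ only gives $w_j\ge b_j$, so your two covering bounds yield roughly $|P|+|Q|$. Writing $w_j=2b_j-(s+\varepsilon_j)$ instead charges $s$ for every block used. Showing that the covering forces enough extra size to pay for each block beyond the first is the whole content of the lemma here, and it is not done.

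Your fallback induction breaks on the extremal configuration: three blocks with $s$ points of each type $\{1,2\},\{1,3\},\{2,3\}$ and all $\varepsilon_i=1$, so $v=3s$. Deleting any point leaves a block of size $2s$, above the new upper bound $(v-1)-s=2s-1$ in \eqref{eq:pcsize}.

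The missing idea is a global count that keeps the multiplicities. Put $I_x=\{i:x\in B_i\}$ and $e=\sum_x(|I_x|-2)$. Then $\sum_i w_i=4v+2e-sN-\sum_i\varepsilon_i$, so \eqref{eq:pcbound} is equivalent to $s(N-3)+\sum_i\varepsilon_i-3\le2e$. The two-element sets $I_x$ pairwise intersect, so they form a star or a triangle. Unless they form a star with at least three leaves, the paper chooses three indices $T$ such that every two-element $I_x$ lies in $T$ and every three-element $I_x$ meets $T$. Then $|I_x\setminus T|\le2(|I_x|-2)$ for every $x$, whence $s(N-3)+\sum_i\varepsilon_i-3\le\sum_{i\notin T}(s+\varepsilon_i)\le\sum_{i\notin T}|B_i|=\sum_x|I_x\setminus T|\le2e$. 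Your Step~2 becomes immediate this way: take $T$ to be any three-element $I_x$, or any three indices. Only the star with at least three leaves needs a separate count, which uses $v-|B_c|\ge s+1-\varepsilon_c$ for the center $c$.
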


\begin{proof}
For $x\in\Omega$, let $I_x=\{i:x\in B_i\}$. Since the $B_i$ cover every
pair, $I_x\cap I_y\ne\varnothing$ whenever $x\ne y$. Moreover, since each $B_i$ is a proper subset of $\Omega$, we have
$\bigcap_{x\in\Omega}I_x=\varnothing$. If
$I_x=\{i\}$, then $I_x\cap I_y\ne\varnothing$ forces $i\in I_y$
for every $y$. Hence $B_i=\Omega$, a contradiction. Thus $|I_x|\ge2$
for every $x$.

Let $e=\sum_{x\in\Omega}(|I_x|-2)$ and
$P=\sum_{i=1}^N\varepsilon_i$. Since
$\sum_i|B_i|=\sum_x|I_x|=2v+e$, inequality \eqref{eq:pcbound} is equivalent
to
\begin{equation}\label{eq:pctarget}
 s(N-3)+P-3\le2e.
\end{equation}
Since $|I_x|\ge2$, $N=1$ is impossible. If $N=2$, then $I_x=\{1,2\}$ for every $x$, a contradiction to
$\bigcap_x I_x=\varnothing$. Thus $N\ge3$.

We now consider the distinct two-element sets among the $I_x$ as edges of a simple graph on $[N]$. These edges meet pairwise, so they form a star or a triangle.
The number of points having the same set $I_x$ is still counted in all sums.

First suppose that these edges do not form a star with at least three leaves. We choose a three-set $T\subseteq[N]$. If the edges form a triangle, take its vertex set. If they form a star with one leaf, take the
two vertices of its unique edge and one further index. If they form a star with two leaves, take the center and the two leaves. Then every two-element
set $I_x$ is contained in $T$, and every three-element set $I_x$ meets $T$. If there is no two-element set, take a three-element set $I_x$ as $T$
when one exists; if all $I_x$ have size at least four, take any three indices. In all cases,
$|I_x\setminus T|\le2(|I_x|-2)$ for every $x$. Since
$\sum_{i\in T}\varepsilon_i\le3$, by the lower bound in
\eqref{eq:pcsize},
\[
 s(N-3)+P-3
 \le\sum_{i\notin T}|B_i|
 =\sum_x|I_x\setminus T|
 \le2e.
\]

It remains to consider a star with center $c$ and leaf set $L$, where
$r=|L|\ge3$. Let $O=[N]\setminus(\{c\}\cup L)$ and $|O|=o$. We split the points of $\Omega$ into three types: those with $|I_x|=2$, those with
$c\notin I_x$ and $|I_x|\ge3$, and those with $c\in I_x$ and
$|I_x|\ge3$. Every set of the second type contains all of $L$, since it must meet every edge $\{c,\ell\}$ of the star.

Let $n_0$ be the number of points with $c\notin I_x$ and $|I_x|\ge3$,
and let $n_1$ be the number with $c\in I_x$ and $|I_x|\ge3$. Let $L_1$ be the number of incidences between the latter points and $L$, and let $D_O$ be the total number of incidences with $O$. A two-element pattern
$\{c,\ell\}$ contributes zero to $e$. A point counted by $n_0$ contains all
$r$ leaves and contributes $r-2+|I_x\cap O|$. A point counted by $n_1$
contributes $|I_x\cap L|+|I_x\cap O|-1$. Therefore
$
 e=n_0(r-2)+L_1+D_O-n_1.
$
Every two-element set $I_x$ contains $c$. Hence a point not in $B_c$
cannot have $|I_x|=2$, and therefore $n_0=v-|B_c|$. By
\eqref{eq:pcsize},
\begin{equation}\label{eq:nzero}
 n_0\ge s+1-\varepsilon_c.
\end{equation}
Let $P_O=\sum_{i\in O}\varepsilon_i$ and
$P_L=\sum_{i\in L}\varepsilon_i$. Then
$D_O=\sum_{i\in O}|B_i|\ge so+P_O$, and every point counted by $n_1$ has at
least two coordinates besides $c$, so $L_1+D_O\ge2n_1$. Since
$N=1+r+o$ and $P=\varepsilon_c+P_L+P_O$, we have
\begin{align*}
&2e-[s(N-3)+P-3]\\
&\quad=(2n_0-s)(r-2)+2L_1+2D_O-2n_1
      -so-\varepsilon_c-P_L-P_O+3\\
&\quad\ge(2n_0-s)(r-2)+2L_1+D_O-2n_1
      -\varepsilon_c-P_L+3\\
&\quad\ge(2n_0-s)(r-2)-\varepsilon_c-r+3\\
&\quad=(2n_0-s-1)(r-2)+(1-\varepsilon_c)\ge0.
\end{align*}
The last inequality follows from \eqref{eq:nzero}: if $\varepsilon_c=0$,
then $n_0\ge s+1$; if $\varepsilon_c=1$, then $n_0\ge s$. Thus
\eqref{eq:pctarget} holds.
\end{proof}

\section{Constant-weight quotas and local reduction}\label{sec:quota}

The two lemmas combine as follows. For constraints of the form
$d_H(y,x_i)\le t_i$, the \emph{slack} of index $i$ at $y$ is
$t_i-d_H(y,x_i)$. The index is \emph{tight} when this slack is zero.

\begin{theorem}\label{thm:quota}
Let $|U|=2k+1$. For $i\in[d]$, let $A_i\subseteq U$,
$|A_i|=k+p_i$, $p_i\in\{0,1\}$, and $1\le q_i\le k$. Assume that
\begin{align}
 |A_i\cap A_j|&\ge q_i+q_j-k &&(i\ne j),\label{eq:qpair}\\
 2\sum_{i=1}^d q_i+|\{i:p_i=0\}|&\le4k+1.\label{eq:qtotal}
\end{align}
Then there is a set $S\in\binom{U}{k}$ such that $|S\cap A_i|\ge q_i$ for
every $i$.
\end{theorem}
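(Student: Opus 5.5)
We plan to deduce \cref{thm:quota} from \cref{lem:hamming} applied with $m=2k+1$, followed by a weight correction controlled by \cref{lem:paircover}.

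\textbf{Step 1: translate to Hamming balls.} Put $x_i=\mathbf 1_{A_i}$. Choose the radii so that the requirement $d_H(y,x_i)\le m-a_i$ becomes $|S\cap A_i|\ge q_i$ once $y=\mathbf 1_S$ has weight $k$. For $|S|=k$ we have
\[
d_H(\mathbf 1_S,x_i)=k+|A_i|-2|S\cap A_i| .
\]
So the quota is equivalent to $d_H(y,x_i)\le 2k+p_i-2q_i$. This suggests taking
\[
a_i=m-(2k+p_i-2q_i)=2q_i+1-p_i .
\]
Then $\sum_i a_i=2\sum_i q_i+|\{i:p_i=0\}|\le 4k+1=2m-1$ by \eqref{eq:qtotal}, which is \eqref{eq:htotal}. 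The pairwise condition \eqref{eq:hpair} reads $d_H(x_i,x_j)\le 2m-a_i-a_j$. Since $d_H(x_i,x_j)=|A_i|+|A_j|-2|A_i\cap A_j|$, it follows from \eqref{eq:qpair}; a short parity check should give exactly this equivalence. Also $1\le a_i\le m$ holds because $q_i\le k$. \Cref{lem:hamming} then yields $y$, i.e. a set $Y\subseteq U$, with $d_H(\mathbf 1_Y,x_i)\le m-a_i$ for all $i$. However, $|Y|$ may differ from $k$.

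\textbf{Step 2: correct the weight.} For a general $Y$, the constraint reads
\[
|Y|+|A_i|-2|Y\cap A_i|\le 2k+p_i-2q_i .
\]
Since $m$ is odd, we may replace $Y$ by $U\setminus Y$ when needed, but that alters the constraints. So instead we plan to move one element at a time. Among all $Y$ satisfying the constraints, pick one with $|Y|$ closest to $k$; by symmetry suppose $|Y|>k$. Removing an element $u\in Y\setminus A_i$ decreases the distance to $x_i$, while removing $u\in Y\cap A_i$ increases it by one. Hence $Y\setminus\{u\}$ fails only for tight indices $i$ (slack zero) with $u\in A_i$. Minimality then says that every $u\in Y$ lies in $A_i$ for some tight $i$. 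Removing two elements at once, which also keeps $|Y|$ moving toward $k$ if $|Y|\ge k+2$, fails for indices of slack at most one containing them, which gives a pair-covering structure on $\Omega=Y$. It remains to handle $|Y|=k+1$ separately, possibly by adding one element and removing two.

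\textbf{Step 3: contradiction via \cref{lem:paircover}.} Let $B_i=Y\cap A_i$ over tight or near-tight indices, so that these sets cover every pair of $\Omega=Y$, with $v=|Y|$. Tightness gives
\[
2|B_i|=|Y|+|A_i|-2k-p_i+2q_i ,
\]
which ties $|B_i|$ to $q_i$. We will set $\varepsilon_i$ from $p_i$ or from the slack, and choose $s$ so that \eqref{eq:pcsize} holds; here $B_i\ne\Omega$ should come from $|A_i|\le k+1<|Y|$ or from pairwise conditions. Then \eqref{eq:pcbound} lower-bounds $\sum_i(2|B_i|-s-\varepsilon_i)$, which is a linear expression in $\sum_i(2q_i+1-p_i)$. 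Comparing with \eqref{eq:qtotal} should give a contradiction.

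The main obstacle is this bookkeeping: choosing $s$ and $\varepsilon_i$ so that the size window \eqref{eq:pcsize} is valid and the resulting bound exceeds $4k+1$, and treating the boundary case $|Y|=k+1$ together with the case $|Y|<k$ (handled by complementation symmetry). I would first try $s=v-k-1$ and $\varepsilon_i=p_i$, and adjust if the inequality falls short by a constant.
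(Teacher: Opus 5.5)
\textbf{Gap: the parity of the slacks.} Your route is the paper's: \cref{lem:hamming} with $a_i=2q_i+1-p_i$, then a feasible $Y$ with $\bigl||Y|-k\bigr|$ minimal, then a contradiction from \cref{lem:paircover}. Your guess $s=v-k-1$, $\varepsilon_i=p_i$ is also the right one for $|Y|>k$. What is missing is the observation that makes the weight correction work. Since $d_H(\mathbf 1_Y,x_i)=|Y|+k+p_i-2|Y\cap A_i|$ and the radius is $2k+p_i-2q_i$, the slack of index $i$ is $k-|Y|+2(|Y\cap A_i|-q_i)$. So every slack has the parity of $|Y|-k$; your own tightness formula $2|B_i|=|Y|-k+2q_i$ already encodes this, but you never use it. Two consequences follow. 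If $|Y|-k$ is odd, there is no tight index, so by your single-element observation any one-element move toward weight $k$ stays feasible. This disposes of $|Y|=k+1$ and every odd excess at once; no ``add one, remove two'' step is needed. If $|Y|-k=2h$ is even, every nontight slack is at least $2$. Hence removing a pair $D\subseteq Y$ fails only at tight $i$ with $D\subseteq A_i$, there are no ``near-tight'' indices, and every covering set has exactly $|B_i|=q_i+h$. Without this, your Step~3 mixes sets with different size--quota relations and leaves the odd-excess cases open.

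\textbf{Remaining work in Step~3.} For $|Y|=k+2h$ take $v=k+2h$, $s=2h-1$, $\varepsilon_i=p_i$. The upper half of \eqref{eq:pcsize} is just $|B_i|\le|A_i|$. The lower half, $q_i\ge h+p_i-1$, is not automatic: it comes from $|A_i\setminus Y|\le|U\setminus Y|=k+1-2h$. The lemma then gives $\sum_{i\in I}(2q_i+1-p_i)\ge4k+2h>4k+1$. The case $|Y|<k$ does not follow ``by complementation symmetry'', because the statement is not self-complementary. Passing to $S'=U\setminus S$ turns the requirement into $|S'\cap(U\setminus A_i)|\ge q_i+1-p_i$ with target weight $|S'|=k+1$, not $k$. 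You have to run the argument again on $Z=U\setminus Y$, adding pairs. The covering sets are $B_i=Z\setminus A_i$, of size $q_i+h+1-p_i$, and the parameters are $v=k+1+2h$, $s=2h$, $\varepsilon_i=1-p_i$ (not $p_i$). This yields $4k+1+2h>4k+1$. These are exactly the two cases of the paper's proof.
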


\begin{proof}
Let $m=2k+1$, let $x_i$ be the characteristic vector of $A_i$, and put
$t_i=2(k-q_i)+p_i$ and $a_i=m-t_i=2q_i+1-p_i$. By
\eqref{eq:qpair},
\[
 d_H(x_i,x_j)\le t_i+t_j=2m-a_i-a_j.
\]
Moreover, by \eqref{eq:qtotal}, we have $\sum_i a_i\le4k+1=2m-1$.
Thus by \cref{lem:hamming}, there is a vector $y\in\{0,1\}^m$ such that $d_H(y,x_i)\le t_i$ for every $i$.

Among all such vectors, choose $y$ so that $\bigl||Y|-k\bigr|$ is minimum,
where $Y=\{r:y_r=1\}$. For each $i$, the slack equals
\begin{equation}\label{eq:slack}
 t_i-d_H(y,x_i)=k-|Y|+2(|Y\cap A_i|-q_i).
\end{equation}
All slacks therefore have the same parity as $|Y|-k$. If this difference
were odd, every slack would be at least one. Flipping one coordinate in the
direction of weight $k$ increases each Hamming distance by at most one and
would preserve feasibility, contrary to the choice of $y$. Hence
$|Y|=k+2h$ or $|Y|=k-2h$ for some $h\ge0$. It remains to show that $h=0$.

\medskip
\noindent\textbf{Case 1.} $|Y|=k+2h$ and $h\ge1$.
Let $I=\{i:d_H(y,x_i)=t_i\}$. A nontight slack is a positive even integer, so it is at
least two. For $D\in\binom{Y}{2}$, let $y'$ be the characteristic vector of
$Y\setminus D$.
Then
\begin{equation}\label{eq:deletepair}
 d_H(y',x_i)-d_H(y,x_i)=2|D\cap A_i|-2.
\end{equation}
The vector $y'$ has weight closer to $k$, so it is not feasible. By
\eqref{eq:deletepair}, a nontight condition cannot fail, and a tight
condition fails exactly when $D\subseteq A_i$. Thus the sets
$B_i=Y\cap A_i$, $i\in I$, cover all pairs of $Y$.

For $i\in I$, equality in \eqref{eq:slack} gives $|B_i|=q_i+h$. Since
$B_i\subseteq A_i$, we have $q_i\le k+p_i-h$. Also,
$|A_i\setminus Y|=k+p_i-q_i-h\le|U\setminus Y|=k+1-2h$, and hence
\begin{equation}\label{eq:highbounds}
 h+p_i-1\le q_i\le k+p_i-h.
\end{equation}
Apply \cref{lem:paircover} with $v=k+2h$, $s=2h-1$, and
$\varepsilon_i=p_i$. By \eqref{eq:highbounds},
$s+p_i\le|B_i|\le v-s-1+p_i$. Moreover, $|B_i|<v$ since
$2h-p_i\ge1$. Therefore
\[
 \sum_{i\in I}(2q_i+1-p_i)
 =\sum_{i\in I}(2|B_i|-s-p_i)
 \ge4(k+2h)-6h=4k+2h>4k+1,
\]
a contradiction to \eqref{eq:qtotal}.

\medskip
\noindent\textbf{Case 2.} $|Y|=k-2h$ and $h\ge1$.
Let $Z=U\setminus Y$. For $D\in\binom{Z}{2}$, let $y'$ be the
characteristic vector of $Y\cup D$. Then
\[
 d_H(y',x_i)-d_H(y,x_i)=2-2|D\cap A_i|.
\]
Let $I=\{i:d_H(y,x_i)=t_i\}$, and put
$B_i=Z\cap(U\setminus A_i)$ for $i\in I$. These sets cover every pair of
$Z$. Otherwise, some $D\in\binom{Z}{2}$ would be contained in no $B_i$ with
$i\in I$. For every tight index we would then have $|D\cap A_i|\ge1$, so
the displayed formula would not increase the corresponding distance. A
nontight condition has slack at least two and would also remain feasible.
Thus $y'$ would be feasible and closer to weight $k$, a contradiction.

Tightness in \eqref{eq:slack} gives
$|Y\cap A_i|=q_i-h$ and
$|B_i|=q_i+h+1-p_i$. Hence $q_i\ge h$; since
$B_i\subseteq U\setminus A_i$, we also have $q_i\le k-h$. Apply
\cref{lem:paircover} with $v=k+1+2h$, $s=2h$, and
$\varepsilon_i=1-p_i$. Indeed,
\[
 2h+1-p_i\le q_i+h+1-p_i=|B_i|\le k+1-p_i
 =v-s-1+\varepsilon_i.
\]
Also $|B_i|\le k+1-p_i<k+1+2h=|Z|$, so every $B_i$ is proper. Therefore
\[
 \sum_{i\in I}(2q_i+1-p_i)
 =\sum_{i\in I}(2|B_i|-s-(1-p_i))
 \ge4(k+1+2h)-3(2h+1)
 =4k+1+2h>4k+1,
\]
which again contradicts \eqref{eq:qtotal}.

Hence $h=0$ and $|Y|=k$. For a weight-$k$ vector,
$d_H(y,x_i)\le t_i$ is equivalent to $|Y\cap A_i|\ge q_i$. Take $S=Y$.
\end{proof}

We now combine Choi's replacements~\cite[Lemmas~3.4 and~3.5]{Choi},
the triangle--star extension, and \cref{thm:quota}.

\begin{corollary}\label{cor:local}
No $3^+$-vertex of $G_0$ has a deficient list of incident thread lengths.
\end{corollary}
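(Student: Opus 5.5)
Suppose, for a contradiction, that some vertex $v$ of $G_0$ with degree $d\ge3$ has a deficient list $t_1,\ldots,t_d$. Then $v$ is incident with the strong maximal threads $T_1,\ldots,T_d$, whose far ends are $x_1,\ldots,x_d$, and by \cref{lem:structure} each $t_i\le2k-1$. We use the parameters $p_i,q_i$ of \cref{sec:setlemmas}. Deficiency then reads as \eqref{eq:totalquota}, which is exactly the total condition \eqref{eq:qtotal}. Let $J$ be the constraint graph. As shown above, its nonisolated part is empty, a star, or a triangle. The plan is to pick a replacement graph $R$ whose fresh links realize every edge of $J$, color $R$, and restrict the coloring to the old vertices. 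This gives far-end colors $X_i$, hence sets $A_i$, that satisfy \eqref{eq:qpair}. \cref{thm:quota} then gives a color $S$ for $v$, and \cref{lem:thread} extends $S$ along every thread. The result is a coloring of $G_0$, which is a contradiction.

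The replacement depends on the shape of $J$. If $J$ has no edges, or if its nonisolated part is a star centered at some index, relabel that center as $1$ and use \cref{lem:star}. Its links $1i$ with $2\le i\le d$ contain all edges of $J$. If the nonisolated part is a triangle, relabel it as $\{1,2,3\}$. When $d=3$ we use \cref{lem:triangle}, and when $d\ge4$ we use \cref{lem:triangle-star}. Either way every edge of $J$ is a link. Both of these lemmas need $t_1+t_2+t_3>2k+1$, so this inequality must be checked. The triangle gives $q_1+q_2>k$, $q_1+q_3>k$ and $q_2+q_3>k$, so $q_1+q_2+q_3\ge\lceil(3k+3)/2\rceil$. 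Since the $q_i$ sum to at most $2k$ and $d\ge3$, this forces $k\ge3$. We then need
\[
t_1+t_2+t_3=6k-2(q_1+q_2+q_3)+p_1+p_2+p_3>2k+1.
\]
Combining $q_1+q_2+q_3\le2k-(d-3)$ with \eqref{eq:totalquota} should give this, possibly after a short parity check. This bookkeeping, especially the equality cases, is the step I expect to need care. If it failed, the configuration would contradict \eqref{eq:totalquota} outright, and I would argue that case instead.

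Next take a coloring $c$ of $R$ and put $X_i=c(x_i)$. Whenever $ij$ is a link of length $t_i+t_j$, \cref{lem:thread} makes $X_i$ and $X_j$ $(t_i+t_j)$-compatible. This holds even when $x_i=x_j$, because a link of length at least three then forms a fresh cycle. Now \eqref{eq:pairoverlap} gives $|A_i\cap A_j|\ge q_i+q_j-k$. For pairs $ij$ that are not edges of $J$, we have $q_i+q_j-k\le0$, so \eqref{eq:qpair} holds trivially. Thus \cref{thm:quota} applies and yields $S$ with $|S\cap A_i|\ge q_i$ for all $i$, which is \eqref{eq:quotaform}. Assign $S$ to $v$. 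By \cref{lem:thread}, each thread $T_i$ can be colored between $S$ and $X_i$. The threads have disjoint interiors, and $c$ restricted to the old vertices is proper, since the old graph is a subgraph of $R$. So $G_0$ is colorable, which contradicts its choice.
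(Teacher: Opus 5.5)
Your proposal is correct and follows the paper's proof. It uses the same three-way split on the shape of $J$ and the same replacement lemmas, which realize every edge of $J$ as a fresh link. It then passes through \eqref{eq:pairoverlap} and \cref{thm:quota} back to a coloring of $G_0$, exactly as the paper does.

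There are two small remarks. First, folding the edgeless case into \cref{lem:star} is legitimate, because that lemma has no hypothesis on $J$. The paper instead deletes $v$ and its threads and invokes the first stage of the minimality directly.

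Second, the inequality you left open holds, so no fallback is needed.
\begin{itemize}
\item For $d=3$, \eqref{eq:deficient} gives $t_1+t_2+t_3\ge3(2k+1)-4k-1=2k+2$ directly. This is your parity check: it is the $|\{i:p_i=0\}|$ term of \eqref{eq:totalquota}.
\item For $d\ge4$, the bound $q_1+q_2+q_3\le2k-d+3$ alone gives $t_1+t_2+t_3\ge6k-2(2k-d+3)=2k+2d-6\ge2k+2$.
\end{itemize}
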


\begin{proof}
Suppose that a vertex $v$ has a deficient list
$t_1,\ldots,t_d$. Put $p_i\equiv t_i\pmod 2$ and
$q_i=k-\lfloor t_i/2\rfloor$, and let $J$ be the graph on $[d]$ in which
$ij\in E(J)$ exactly when $q_i+q_j>k$. Condition \eqref{eq:qtotal}
follows from \eqref{eq:totalquota}. By the structure of
$J$, one of the following three cases occurs.

If $J$ has no edge, let $R$ be obtained by deleting $v$ and the
internal vertices of all incident threads. The graph $R$ satisfies the two
hypotheses of \cref{thm:main} and has fewer $3^+$-vertices than $G_k$.
Hence it is colorable by the first stage of Choi's minimum-counterexample
choice~\cite[Section~3]{Choi}.

If the nonisolated part of $J$ is a star, relabel the threads so that its
center is $1$ and apply Choi's star replacement
\cref{lem:star}. Every edge of $J$ is then represented by a fresh link of
length $t_i+t_j$.

Suppose finally that the nonisolated part of $J$ is the triangle on
$\{1,2,3\}$. If $d=3$, then \eqref{eq:deficient} gives
$t_1+t_2+t_3\ge2k+2$, so Choi's triangular replacement
\cref{lem:triangle} applies. If $d\ge4$, then
$q_1+q_2+q_3\le2k-d+3$, and hence
\[
 t_1+t_2+t_3
 =6k-2(q_1+q_2+q_3)+p_1+p_2+p_3
 \ge2k+2d-6\ge2k+2.
\]
We then apply the triangle--star extension
\cref{lem:triangle-star}. In both cases every edge of $J$ is represented by
a fresh link of length $t_i+t_j$.

Fix a $(2k+1:k)$-coloring of the graph $R$ obtained in the appropriate
case. Let $X_i$ be the color of $x_i$. Set $A_i=X_i$ when $p_i=0$, and set
$A_i=U\setminus X_i$ when $p_i=1$. If $ij\in E(J)$, the coloring of the
fresh link shows, by \cref{lem:thread}, that $X_i$ and $X_j$ are
$(t_i+t_j)$-compatible. Therefore \eqref{eq:pairoverlap} gives
$|A_i\cap A_j|\ge q_i+q_j-k$. If $ij\notin E(J)$, the same inequality is
automatic because its right side is nonpositive. Thus all assumptions of
\cref{thm:quota} hold.

By \cref{thm:quota}, choose $S\in\binom{U}{k}$ with
$|S\cap A_i|\ge q_i$ for every $i$. By \eqref{eq:quotaform}, the colors $S$ and $X_i$ extend over the $i$th old
thread. The incident threads have pairwise disjoint interiors, so the
extensions combine to a coloring of $G_0$, a contradiction.
\end{proof}

\section{Discharging and consequences}\label{sec:discharging}

\begin{proof}[Proof of \cref{thm:main}]
We work with Choi's minimum-counterexample graph $G_0$ described in
\cref{sec:threads}; its structural properties are those in
\cite[Lemmas~3.1 and~3.2]{Choi}. Give every vertex of $G_0$ its degree as
initial charge. Each $3^+$-vertex
sends $1/(2k)$ to every degree-two vertex lying internally on one of its
incident maximal threads.

By \cref{lem:structure}, every degree-two vertex lies on one strong maximal
thread. It receives $1/(2k)$ from each end, so its final charge is
$2+1/k$.

Let $v$ have degree $d\ge3$, and let $t_1,\ldots,t_d$ be the lengths of
its incident maximal threads. Its final charge is the quantity in
\eqref{eq:charge}. If $3\le d\le2k$ and this charge is less than
$2+1/k$, then \eqref{eq:deficient} holds, contrary to
\cref{cor:local}. If $d\ge2k+1$, then $t_i\le2k-1$ by
\cref{lem:structure}, and the final charge is at least
\[
 d-\frac{d(2k-2)}{2k}=\frac dk\ge2+\frac1k.
\]
Thus every vertex has final charge at least $2+1/k$.

The total charge is unchanged and equals $2|E(G_0)|$. Hence the average
degree of $G_0$ is at least $2+1/k$, contrary to
$\mad(G_0)<2+1/k$.
\end{proof}

The theorem has the following planar consequence.

\begin{corollary}\label{cor:planar}
Let $k\ge2$. Every planar graph of girth at least $4k+2$ has a
$(2k+1:k)$-coloring.
\end{corollary}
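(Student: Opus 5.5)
We derive \cref{cor:planar} directly from \cref{thm:main}. We check the two hypotheses: $\og(G)\ge2k+1$ and $\mad(G)<2+1/k$.

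\emph{Odd girth.} A planar graph $G$ with $g(G)\ge4k+2$ has every cycle of length at least $4k+2$. In particular every odd cycle has length at least $4k+3>2k+1$, so $\og(G)\ge2k+1$. If $G$ is a forest, then $\og(G)=\infty$, and the bound holds trivially.

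\emph{Maximum average degree.} Let $H$ be a nonempty subgraph of $G$. Then $H$ is planar with girth $g\ge4k+2$, where $g=\infty$ if $H$ is a forest. We show $2|E(H)|/|V(H)|<2+1/k$, arguing component by component.

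If $H$ is a forest, then $|E(H)|\le|V(H)|-1$, so the average degree is less than $2<2+1/k$.

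Otherwise we use Euler's formula on a connected component $C$ containing a cycle. In a plane embedding of $C$, each face boundary walk has length at least $g$. Hence $2|E(C)|\ge g|F(C)|$, and together with $|V(C)|-|E(C)|+|F(C)|=2$ this gives
\[
 |E(C)|\le\frac{g}{g-2}\bigl(|V(C)|-2\bigr)<\frac{g}{g-2}|V(C)|.
\]
This needs a little care, since a face boundary can be a walk that traverses bridges twice. The inequality $2|E|\ge g|F|$ still holds, because any face of a graph containing a cycle has a boundary walk containing a cycle. Therefore
\[
 \frac{2|E(C)|}{|V(C)|}<\frac{2g}{g-2}\le\frac{2(4k+2)}{4k}=2+\frac1k,
\]
where the last step uses that $2g/(g-2)$ is decreasing in $g$.

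Summing over components, a forest component contributes $|E|<|V|$, and each other component contributes $2|E|<(2+1/k)|V|$. Hence $2|E(H)|<(2+1/k)|V(H)|$, and so $\mad(G)<2+1/k$.

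\cref{thm:main} now gives a $(2k+1:k)$-coloring of $G$.
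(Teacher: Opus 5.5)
Your proof is correct and follows the same route as the paper: you check $\og(G)\ge 2k+1$ from the girth bound, bound $\mad(G)<2g/(g-2)\le 2+1/k$ via Euler's formula, and then apply \cref{thm:main}. You also supply details the paper leaves implicit, namely passing to arbitrary subgraphs and their components and justifying $2|E|\ge g|F|$ when face boundary walks traverse bridges.
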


\begin{proof}
If $G$ is a forest, then $\mad(G)<2$ and $\og(G)=\infty$, so the result
follows from \cref{thm:main}. Otherwise let $g=g(G)\ge4k+2$. Euler's
formula gives
\[
 \mad(G)<\frac{2g}{g-2}\le2+\frac1k,
\]
and $\og(G)\ge g\ge2k+1$. Apply \cref{thm:main}.
\end{proof}

For $k=2$, Dvo\v{r}\'ak, \v{S}krekovski, and
Valla~\cite{DvorakSkrekovskiValla} proved that odd girth at least $9$
suffices; in particular, full girth at least $8$ suffices. For $k\ge3$,
Klostermeyer and Zhang~\cite[Theorem~1.1]{KlostermeyerZhang} obtained the
odd-girth bound $10k-7$, and Ren and Bu~\cite{RenBu} improved it to
$10k-9$. These results control only odd cycles, whereas
\cref{cor:planar} assumes a lower bound on the full girth.

\section*{Declaration of competing interest}

The authors declare that they have no known competing financial interests or
personal relationships that could have appeared to influence the work
reported in this paper.

\section*{Data availability}

No data were used for the research described in this article.

\section*{Declaration on the use of AI}

The authors used ChatGPT 5.6 Pro to assist in discussing proof strategies, checking proofs, and improving exposition.

\end{document}